\documentclass{siamltex} 

\usepackage[T1]{fontenc}
\usepackage[latin1]{inputenc}
\usepackage[english]{babel}
\usepackage{graphicx}
\usepackage{url}
\usepackage{graphics}
\usepackage{epsfig,color}
\usepackage{amsmath,amssymb}
 \usepackage{relsize}
\setlength{\textwidth}{6.2in}      %
\setlength{\evensidemargin}{\oddsidemargin}
\renewcommand{\theequation}{\thesection.\arabic{equation}}

\newtheorem{thm}{Theorem}[section]

\newtheorem{lem}[thm]{Lemma}

\newtheorem{rem}[thm]{Remark}

\begin{document}
\newcommand{\BX}{{\bf X}}
\newcommand{\cv}{{\cal V}}
\newcommand{\cW}{{\cal W}}
\newcommand{\co}{{\cal O}}

\renewcommand{\theequation}{\thesection.\arabic{equation}}
\def\@eqnnum{{\reset@font\rm (\theequation)}}

\def\abstract{
\advance \rightskip by 10mm
\advance \leftskip by 10mm
\vspace{-0.8em}
\noindent
\small{\bf Abstract.}
}
\def\endabstract{\par\normalsize\rm}

\def\Xint#1{\mathchoice
{\XXint\displaystyle\textstyle{#1}}%
{\XXint\textstyle\scriptstyle{#1}}%
{\XXint\scriptstyle\scriptscriptstyle{#1}}%
{\XXint\scriptscriptstyle\scriptscriptstyle{#1}}%
\!\int}
\def\XXint#1#2#3{{\setbox0=\hbox{$#1{#2#3}{\int}$}
\vcenter{\hbox{$#2#3$}}\kern-.5\wd0}}
\def\ddashint{\Xint=}
\def\dashint{\Xint-}

\def\a{\alpha}
\def\b{\beta}
\def\d{\delta}\def\D{\Delta}
\def\e{\epsilon}
\def\g{\gamma}\def\G{\Gamma}
\def\k{\kappa}
\def\lam{\lambda}\def\Lam{\Lambda}
\renewcommand\o{\omega}\renewcommand\O{\Omega}
\def\s{\sigma}\def\S{\Sigma}
\renewcommand\t{\theta}\def\vt{\vartheta}
\newcommand{\vphi}{\varphi}
\def\z{\zeta}

\newcommand{\tsigma}{\tilde{\s}}
\newcommand{\tbsigma}{\tilde{\bsigma}}
\def\te{\tilde{\e}}
\def\tu{\tilde{u}}

\newcommand{\bchi}{\mbox{\boldmath$\chi$}}
\newcommand{\bdelta}{\mbox{\boldmath$\delta$}}
\newcommand{\bepsilon}{\mbox{\boldmath$\epsilon$}}
\newcommand{\bfeta}{\mbox{\boldmath$\eta$}}
\newcommand{\bgamma}{\mbox{\boldmath$\gamma$}}
\newcommand{\bomega}{\mbox{\boldmath$\omega$}}
\newcommand{\bvphi}{\mbox{\boldmath$\varphi$}}
\newcommand{\bphi}{\mbox{\boldmath$\phi$}}
\newcommand{\bPhi}{\mbox{\boldmath$\Phi$}}
\newcommand{\bpsi}{\mbox{\boldmath$\psi$}}
\newcommand{\bPsi}{\mbox{\boldmath$\Psi$}}
\newcommand{\bsigma}{\mbox{\boldmath$\sigma$}}
\newcommand{\btau}{\mbox{\boldmath$\tau$}}
\newcommand{\bxi}{\mbox{\boldmath$\xi$}}
\newcommand{\brho}{\mbox{\boldmath$\rho$}}
\newcommand{\bbeta}{\mbox{\boldmath$\beta$}}
\newcommand{\bzeta}{\mbox{\boldmath$\zeta$}}

\def\bk{\boldsymbol{\kappa}}
\def\bmu{\boldsymbol\mu}
\def\bxi{\boldsymbol{\xi}}
\def\bz{\boldsymbol{\zeta}}

\def\ba{{\bf a}}
\def\bb{{\bf b}}
\def\bc{{\bf c}}
\def\be{{\bf e}}
\def\bff{{\bf f}}
\def\bg{{\bf g}}
\def\bn{{\bf n}}
\def\bp{{\bf p}}
\def\bq{{\bf q}}
\def\bs{{\bf s}}
\def\bt{{\bf t}}
\def\bu{{\bf u}}
\def\bv{{\bf v}}
\def\bw{{\bf w}}
\def\bx{{\bf x}}
\def\by{{\bf y}}
\def\bzz{{\bf z}}

\def\bD{{\bf D}}
\def\bE{{\bf E}}
\def\bF{{\bf F}}
\def\bH{{\bf H}}
\def\bJ{{\bf J}}
\def\bV{{\bf V}}
\def\bU{{\bf U}}
\def\bW{{\bf W}}
\def\bX{{\bf X}}
\def\bY{{\bf Y}}

\def\cA{{\cal A}}
\def\cC{{\cal C}}
\def\cD{{\cal D}}
\def\cE{{\cal E}}
\def\cF{{\cal F}}
\def\cG{{\cal G}}
\def\cI{{\cal I}}
\def\cJ{{\cal J}}
\def\cK{{\cal K}}
\def\cL{{\cal L}}
\def\cO{{\cal O}}
\def\cP{{\cal P}}
\def\cQ{{\cal Q}}
\def\cR{{\cal R}}
\def\cS{{\cal \Sigma}}
\def\cT{{\cal T}}
\def\cU{{\cal U}}
\def\cV{{\cal V}}

\def\scT{{_\cT}}
\def\sD{{_D}}
\def\sE{{_E}}
\def\sF{{_F}}
\def\sFz{{_{F_z}}}
\def\sK{{_K}}
\def\sI{{_I}}
\def\sb{{_b}}
\def\sN{{_N}}

\def\curl{{{\bf curl} \ }}
\def\rot{{\mbox{rot}\ }}
\def\BPI{{\bf \Pi}}

\def\cth{\cT_h}
\def\ctH{\cT_H}

\def\tJ{\tilde{\J}}

\def\hK{\widehat{K}}
\def\hx{\widehat{x}}
\def\hy{\widehat{y}}
\def\bhv{\widehat{\bv}}

\def\l{\ell}
\def\bl{\boldsymbol{\ell}}
\def\col{\colon}
\def\f12{\frac12}
\def\dfrac{\displaystyle\frac}
\def\dint{\displaystyle\int}
\def\nab{\nabla}
\def\p{\partial}
\def\sm{\setminus}
\def\dsum{\displaystyle\sum}
\newcommand{\pp}[2]{\frac{\partial {#1}}{\partial {#2}}}
\def\bzero{{\bf 0}}

\def\divv{\nab\cdot}
\def\divx{\nab_x\cdot}
\def\divtx{\nab_{t,x}\cdot}
\def\nabx{\nab_x}

\newcommand{\curlt}{{\nabla \times}}
\newcommand{\gperp}{\nabla^{\perp}}
\newcommand{\gradt}{\nabla\cdot}

\def\forallqq{\quad\forall\,}
\def\aph{A^{1/2}}
\def\amh{A^{-1/2}}

\def\osc{{\rm osc \, }}

\def\Im{{\rm Im}}
\newcommand{\tr}{{\rm tr}}
\def\divvr{{\rm div}}
\def\curllr{{\rm curl}}
\def\curll{{\rm curl}}
\def\curl{{\bf curl}}
\newcommand{\bgrad}{{\bf grad}}
\newcommand\diam{\mathrm{diam\,}}
\renewcommand\Im{\mathrm{Im\,}}
\def\Span{\mbox{Span}}
\def\supp{\mbox{supp\,}}
\newcommand{\trace}{{\rm trace}}

\newcommand{\tri}{|\!|\!|}
\newcommand{\ljump}{\lbrack\!\lbrack}
\newcommand{\rjump}{\rbrack\!\rbrack}
\newcommand{\bdm}{\begin{displaymath}}
\newcommand{\edm}{\end{displaymath}}
\newcommand{\beq}{\begin{equation}}
\newcommand{\eeq}{\end{equation}}
\newcommand{\beqa}{\begin{eqnarray}}
\newcommand{\eeqa}{\end{eqnarray}}
\newcommand{\beqas}{\begin{eqnarray*}}
\newcommand{\eeqas}{\end{eqnarray*}}
\newcommand{\ul}{\underline}
\newcommand{\wh}{\widehat}
\newcommand{\la}{\langle}
\newcommand{\ra}{\rangle}

\newcommand{\Lt}{L^2(\Omega)}
\newcommand{\Lts}{L^2(\Omega)^2}
\newcommand{\Ltc}{L^2(\Omega)^3}
\newcommand{\Ho}{H^1(\Omega)}
\newcommand{\Hoh}{H^1(\wh{\Omega})}
\newcommand{\Hoi}{H^1(\Omega_i)}
\newcommand{\Hos}{H^1(\Omega)^2}
\newcommand{\Hoc}{H^1(\Omega)^3}
\newcommand{\Hoch}{H^1(\wh{\Omega})^3}
\newcommand{\Hoci}{H^1(\Omega_i)^3}
\newcommand{\Hoz}{H^1_0(\Omega)}
\newcommand{\Ht}{H^2(\Omega)}
\newcommand{\Hti}{H^2(\Omega_i)}
\newcommand{\Hts}{H^2(\Omega)^2}
\newcommand{\Htc}{H^2(\Omega)^3}
\newcommand{\Htz}{H^0(\Omega)}
\newcommand{\Hh}{H^{1/2}(\Gamma)}
\newcommand{\Hhi}{H^{1/2}(\Gamma_i)}
\newcommand{\Hmh}{H^{-1/2}(\Gamma)}
\newcommand{\Hdiv}{H(\divvr;\,\Omega)}
\newcommand{\Hdivh}{H(\divv;\,\wh \Omega)}
\newcommand{\hcurl}{H(\curl\,A;\,\Omega)}
\newcommand{\Hcurl}{H(\curll\,A;\,\Omega)}
\newcommand{\Hcrl}{H(\curll\,;\,\Omega)}
\newcommand{\hcrl}{H(\curl\,;\,\Omega)}
\newcommand{\Hcrlh}{H(\curll\,;\,\wh\Omega)}
\newcommand{\hcrlh}{H(\curl\,;\,\wh\Omega)}
\newcommand{\Wdiv}{\BW_0(\mbox{\divv}\,;\,\Omega)}
\newcommand{\Wcurl}{\BW_0(\mbox{\curl}\,A;\,\Omega)}
\newcommand{\WcrossV}{\BW \times V}

\def\grad{\nabla}
\def\calS{{\cal S}}
\def\cH{{\cal H}}
\def\ba{{\mathbf{a}}}
\def\cN{{\cal N}}

\def\bE{{\bf E}}
\def\bS{{\bf S}}
\def\br{{\bf r}}
\def\bW{{\bf W}}
\def\bLambda{{\bf \Lambda}}

\def\zT{{z_{_{\cT}}}}
\def\vT{{v_{_{\cT}}}}
\def\uT{{u_{_{\cT}}}}

\newcommand{\dd}{\underline{{\mathbf d}}}
\newcommand{\C}{\rm I\kern-.5emC}
\newcommand{\R}{\rm I\kern-.19emR}
\newcommand{\W}{{\mathbf W}}
\def\3bar{{|\hspace{-.02in}|\hspace{-.02in}|}}
\newcommand{\A}{{\mathcal A}}

\newcommand{\aA}{{ \a_{\sF,_A}}}

\newcommand{\aH}{{ \a_{\sF,_H}}}

\newcommand{\lJump}{[\![}
\newcommand{\rJump}{]\!]}
\newcommand{\jump}[1]{[\![ #1]\!]}

\newcommand{\red}[1]{{\color{red} {#1} }}

\def\Xint#1{\mathchoice
{\XXint\displaystyle\textstyle{#1}}%
{\XXint\textstyle\scriptstyle{#1}}%
{\XXint\scriptstyle\scriptscriptstyle{#1}}%
{\XXint\scriptscriptstyle\scriptscriptstyle{#1}}%
\!\int}
\def\XXint#1#2#3{{\setbox0=\hbox{$#1{#2#3}{\int}$}
\vcenter{\hbox{$#2#3$}}\kern-.5\wd0}}
\def\ddashint{\Xint=}
\def\dashint{\Xint-}

\title{Finite Element Methods for Interface Problems: \\ 
Robust Residual A Posteriori Error Estimates \\
for Discontinuous Approximations
}
\author{
Zhiqiang Cai\thanks{
Department of Mathematics, Purdue University, 150 N. University
Street, West Lafayette, IN 47907-2067, \{caiz, he75\}@purdue.edu.
This work was supported in part by the National Science Foundation
under grants DMS-1217081 and DMS-1522707.}
\and Cuiyu He\footnotemark[2]
 \and Shun Zhang\thanks{Department of Mathematics, 
City University of Hong Kong, Hong Kong, shun.zhang@cityu.edu.hk.
This work was supported in part
by Hong Kong Research Grants Council under the GRF Grant Project No. 11303914, CityU 9042090.}
}
\maketitle

\begin{abstract}
For elliptic interface problems, 
this paper studies residual-based a posteriori error estimations for discontinuous finite element approximations.
For the Crouzeix-Raviart
nonconforming and the discontinuous Galerkin elements in both two- and three-dimensions, 
the global reliability bounds are established with constants independent of the jump of the diffusion coefficient.
Moreover, we obtain these estimates with no assumption on the distribution of the diffusion coefficient.
\end{abstract}

\section{Introduction}\label{intro}
\setcounter{equation}{0}

Let $\O$ be a bounded, open, connected subset in $\R^d$ ($d=2$ or $3$) with a Lipschitz continuous boundary $\partial \O$. 
This paper studies a posteriori error estimates of nonconforming and discontinuous finite element methods for the following  
interface problems (i.e., diffusion problems with discontinuous coefficients):
 \begin{equation}\label{scalar}
 -\nabla\cdot \,(\a(x)\nabla\, u) = f
 \quad \mbox{in} \,\,\Omega
 \end{equation}
with homogeneous Dirichlet boundary condition, $u = 0$ on $\p\O$, for simplicity.
Here, $f \in L^{2}(\O)$ is a given function; 
and diffusion coefficient $\a(x)$ is positive 
and piecewise constant on polygonal subdomains of $\O$ with possible
large jumps across subdomain boundaries (interfaces):
\[
 \a(x)=\a_i > 0\quad\mbox{in }\,\O_i
 \quad\mbox{for }\, i=1,\,...,\,n.
\]
Here, $\{\Omega_i\}_{i=1}^n$ is a partition of the domain $\O$ with
$\O_i$ being an open polygonal domain. It is well-known \cite{Kel:74} that problem (\ref{scalar}) has a unique solution
$u$ in $H^{1+s}(\O)$ with possibly very small $s>0$.

Recently, in \cite{CaZh:15apriori} we obtained a priori error estimates for the nonconforming, the mixed, and the discontinuous 
finite element approximations. Those estimates are robust with respect to the diffusion coefficient and optimal with respect to
local regularity of the solution with no assumption on the distribution of the diffusion coefficient. 
This paper is a continuation of \cite{CaZh:15apriori} on the a posteriori error estimates. We refer readers to \cite{CaZh:15apriori}
for definitions of commonly used notations.

For the conforming finite element approximation  to the interface problem in (\ref{scalar}), 
by using the diffusion coefficient to properly weight
the element residual and the edge flux jump, Bernardi and Verf\"urth in \cite{BeVe:00} (see also \cite{Pet:02}) 
showed that the resulting residual
based error estimator is locally efficient and globally reliable with the efficiency constant independent of 
the jump of the diffusion coefficient.
Moreover, under the assumption of the quasi-monotone distribution of the diffusion coefficient 
(see section 1.1 of \cite{CaZh:15apriori} on the QMA), 
the reliability constant is proved to be uniform with respect to the jump as well. 
Since then, various robust a posteriori error estimators have been constructed, analyzed, and implemented
(see, e.g., \cite{LuWo:04, CaZh:08a, Voh:11} for the conforming elements, \cite{Ain:05, CaZh:10a} 
for the nonconforming elements,
\cite{Ain:07, Kim:07, CaZh:10a, DuLinZhang:16} for the mixed elements,  \cite{CaYeZh:09,ErSt:08,ErStZu:08} for the discontinuous elements, \cite{MuJa:13,MuJa:14} for the finite volume methods).
The robustness for those estimators was theoretically established again under the QMA. 
However, numerical results by many researchers including ours strongly suggest that those estimators are robust 
even when the diffusion coefficients are not quasi-monotone. 

The purpose of this paper is to theoretically establish robust reliability bounds of the residual estimators 
without the QMA for nonconforming and discontinuous finite element approximations. 
The QMA is imposed to guarantee the desired approximation and stability properties of the Cl\'ement type interpolation (see  \cite{BeVe:00} for details), which is one of the key steps in obtaining the reliability bound of the residual based estimator. 

For the conforming elements, this type of interpolations is defined on vertex patches through averaging and, hence, the QMA is required. One reason of using the Cl\'ement interpolation is due to its minimum regularity requirement of the approximated function as in the commonly used reliability analysis (see, e.g., \cite{BeVe:00}).
For the nonconforming and discontinuous elements, one may construct a modified 
Cl\'ement interpolation satisfying the desired properties without the QMA (see \cite{CaHeZh:14}).
Due to the lack of the error equation, the reliability bound for discontinuous approximations is commonly
analyzed through the Helmholtz decomposition of the true error. Application of the Helmholtz decomposition, in turn,
leads to establishment of the reliability bound for conforming approximations and, hence, the requirement of the QMA.

In \cite{CaHeZh:14}, we introduced a new and direct analysis, that does not involve the Helmholtz decomposition,
for the two-dimensional nonconforming elements. In particular, we derived an $L^2$ representation of the error in the energy norm 
that naturally contains three terms: the element residual, the face flux jump, and the face solution jump. 
Due to a technical difficulty, the solution jump was modified at elements where the QMA is not satisfied. 
The modified estimator was proved to be robustly reliable without the QMA. Unfortunately, robustness of local efficiency of the modified indicator requires the QMA.

With the help of our newly developed trace inequality in \cite{CaZh:15apriori}, 
we are able to bound the solution jump without any modification.
Moreover, instead of using the nonconforming Cl\'ement type interpolation as in \cite{CaHeZh:14}, we use the standard nonconforming interpolation and the piecewise constant projection for the respective nonconforming 
and discontinuous elements, which fully takes advantage of the local feature of the element itself.  
Both the approximations are element-wisely defined. 
Thus, without the QMA, we are able to prove the robustness in both the
two- and three-dimension for the nonconforming and discontinuous elements.

The paper is organized as follows. Section~2 describes the nonconforming and the discontinuous Galerkin
finite element approximations and some preliminaries. Residual based a posteriori error estimators are described in
section~3, and local efficiency bounds are stated in section~4. Without the QMA, the robust reliability bounds are proved 
for the nonconforming and the discontinuous Galerkin elements in both the two- and three-dimensions
in the section 5. \\

\section{Discontinuous Finite Element Approximations and Preliminaries}
\setcounter{equation}{0}

Assume that the triangulation $\cT$ is regular 
and that the physical interfaces $\{\p\O_i\cap\p\O_j\,:\, i,j=1,\,...,\,n\}$ do not cut
through any element $K\in\cT$. 
For the nonconforming finite element approximation, we only consider the Crouzeix-Raviart (CR)
linear element, and denote the  CR nonconforming linear finite element space by
$$
V^{cr} := \{ v \in L^2(\O) : v|_K \in P_1(K) \; \,\forall \, K\in\cT\,
  \mbox{ and }\,\int_F \jump{v}\,ds =0 \; \forall  \,F\in\cE\},
$$
where $\cE$ is the set of all faces of the triangulation $\cT$. 
(A face in this paper means an edge or a face in the respective two- or three-dimension.)
Denote the discontinuous finite element space of degree $k\ge 0$ by
$$
D_k := \{ v \in L^2(\O) : v|_K \in P_k(K) \; \,\forall \, K\in\cT\}.
$$

The corresponding variational formulation of problem (\ref{scalar}) with homogeneous boundary conditions
 is to find $u\in H_{0}^1(\O)$ such that
 \begin{eqnarray}\label{galerkin}
(\a \nabla u,\,\nabla v) &=& (f,v)\qquad \,\forall\,v\in H_{0}^1(\O).
\end{eqnarray}
The CR nonconforming finite element approximation is to find $u^{cr} \in V^{cr}$ such that
\begin{eqnarray} \label{problem_nc}
  ( \a\nabla_h u^{cr} ,\, \nabla_h v) &=& (f,v)  \qquad \,\forall\, v\in
  V^{cr}.
\end{eqnarray}
For any $K\in\cT$ and some $\alpha>0$, let 
 \[
 V^{1+\alpha}(K)=\{v\in H^{1+\a}(K)\,:\, \Delta\, v \in L^2(K)\}
 \]
and let 
 \[
  V^{1+\a}(\cT) :=\{v\, :\, v|_K\in V^{1+\alpha}(K)\,\, \forall\, K\in\cT\}.
  \]
In \cite{CaYeZh:09} we introduced the following variational formulation for the interface 
problem in (\ref{scalar}): find $u\in V^{1+\epsilon}(\cT)$ with $\epsilon >0$
 such that
 \beq\label{DGV}
 a_{dg}(u,\,v) = (f,\,v)\quad\forall\,\, v \in V^{1+\epsilon}(\cT),
\eeq
where the bilinear form $a_{dg}(\cdot,\,\cdot)$ is given by
\begin{eqnarray*}
a_{dg}(u,v)&=&(\a\nabla_h u,\nabla_h v)
 +\sum_{F\in\cE}\int_F\gamma  \dfrac{\aH}{h_F} \jump{u}
 \jump{v}\,ds \\[2mm] \nonumber
&&\quad -\sum_{F\in\cE}\int_F\{\a\nabla
u\cdot\bn_F\}_{w}^F \jump{v}ds -
\sum_{F\in\cE}\int_F \{\a\nabla
v\cdot\bn_F\}_{w}^F\jump{u}ds.
\end{eqnarray*}
Here, $\{\cdot\}_w^F$ is the weighted average,
$\aH$ is the harmonic average of $\a$ over $F$,
and $\gamma$ is a positive constant only depending on the shape of elements.
The discontinuous Galerkin finite element method  is then to
seek $u^{dg}_k \in D_k$ such that
\beq\label{problem_dg}
a_{dg}(u^{dg}_k,\, v) = (f,\,v)\quad \forall\, v\in D_k.
\eeq
Difference between (\ref{DGV}) and (\ref{problem_dg}) leads to the following error equation:
\beq \label{erroreq_dg}
a_{dg}(u-u^{dg}_k,\,v) = 0 \quad \forall \, v\in D_k.
\eeq
For simplicity, we consider only this symmetric version of the interior penalty discontinuous Galerkin finite element method 
since its extension to other versions of discontinuous Galerkin approximations is straightforward.
Define the jump semi-norm and the DG norm by 
$$
\|v\|_{J,F} = \sqrt{\dfrac{\aH}{h_F}} \| \jump{v} \|_{0,F}
\quad\mbox{and}\quad
\tri v\tri_{dg} = \left(\|\a^{1/2}\nabla_h v\|_{0,\O}^2+\sum_{F\in \cE}\|v\|_{J,F} ^2 \right)^{1/2},
$$
respectively, for all $v\in  H^1(\cT)$. 

In order to guarantee the robustness of the error estimate with respect to $\a$, 
we choose harmonic weights in this paper:
$\omega_F^\pm = \dfrac{\a_F^\mp}{\a_F^-+\a_F^+}$.
It is easy to show that
\begin{equation} \label{weight1}
w_F^{\pm}\a_F^{\pm} \le \sqrt{\a^{\pm}\aH}, \quad\,\,
 \frac{\omega_F^+}{\sqrt{\alpha_F^-}} \le \sqrt{\frac{1}{\alpha_{F,A}}},
	\,\quad \mbox{and} \quad\,
	\frac{\omega_F^-}{\sqrt{\alpha_F^+}} \le \sqrt{\frac{1}{\alpha_{F,A}}}.
\end{equation}
We shall use the following commonly used identity:
 \begin{equation}\label{jump-id}
 \jump{ u v}_F = \{v\}^w_F\, \jump{u}_F + \{u\}_w^F\, \jump{v}_F.
 \eeq

An inequality proved in \cite{CaZh:15apriori} plays an important role to bound the solution jump
in the reliability analysis for the nonconforming and discontinuous elements. For the convenience of readers,
we cite it here.

\begin{lem} 
Let $F$ be a face of $K\in\cT$, $\bn_F$ the unit vector normal to $F$, and $s>0$.
Assume that $v$ is a given function in $V^{1+s}(K)$.
For any $w_h\in P_k(K)$, we have 
 \begin{eqnarray}\label{tracecombined}  
 \int_F \left(\nabla v\cdot\bn_F\right)\, w_h \,ds 
  &\leq & C\, h_F^{-1/2}\|w_h\|_{0,F}
 \left(\|\nabla v\|_{0,K} + h_K\|\Delta v\|_{0,K}\right),   
\end{eqnarray}
with the constant $C$ independent of $h$ and $v$.
\end{lem}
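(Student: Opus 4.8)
The plan is to estimate the left-hand side by splitting off a polynomial part of $\nabla v$ on $K$ and handling the remainder with a scaled trace theorem. First I would pick a polynomial $\bm{q}\in P_{k}(K)^d$ (for instance the $L^2(K)$-projection of $\nabla v$ onto $P_k(K)^d$, or simply a constant equal to the mean of $\nabla v$ over $K$ when $k=0$ suffices for the argument) and write
\begin{equation*}
\int_F (\nabla v\cdot\bn_F)\,w_h\,ds
= \int_F (\bm{q}\cdot\bn_F)\,w_h\,ds + \int_F \big((\nabla v-\bm{q})\cdot\bn_F\big)\,w_h\,ds.
\end{equation*}
For the first term, both $\bm{q}\cdot\bn_F$ and $w_h$ are polynomials on $F$, so a discrete (inverse) trace inequality on the reference element together with an affine scaling gives
$\|\bm{q}\cdot\bn_F\|_{0,F}\le C h_F^{-1/2}\|\bm{q}\|_{0,K}$, and then Cauchy--Schwarz on $F$ bounds this term by $C h_F^{-1/2}\|w_h\|_{0,F}\|\bm{q}\|_{0,K}\le C h_F^{-1/2}\|w_h\|_{0,F}\|\nabla v\|_{0,K}$, using stability of the projection. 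For the second term I would apply the standard scaled trace inequality for $H^1$-type functions: since $\nabla v-\bm{q}\in H^s(K)$ with $\Delta v = \nabla\cdot(\nabla v-\bm q)\in L^2(K)$, one has
\begin{equation*}
\|(\nabla v-\bm{q})\cdot\bn_F\|_{0,F}
\le C\Big(h_F^{-1/2}\|\nabla v-\bm{q}\|_{0,K} + h_F^{1/2}\|\nabla\cdot(\nabla v-\bm{q})\|_{0,K}\Big),
\end{equation*}
and again Cauchy--Schwarz against $w_h$ on $F$. Using $\|\nabla v-\bm q\|_{0,K}\le\|\nabla v\|_{0,K}$ (projection stability) and $\nabla\cdot(\nabla v-\bm q)=\Delta v$ (the divergence of the constant/polynomial part is handled by absorbing it: for $k=0$ it is zero, for general $k$ one bounds $\|\nabla\cdot\bm q\|_{0,K}$ by an inverse estimate and $\|\nabla v\|_{0,K}$, which is dominated by the first term up to a constant), one collects everything into $C h_F^{-1/2}\|w_h\|_{0,F}(\|\nabla v\|_{0,K}+h_K\|\Delta v\|_{0,K})$, where $h_F\simeq h_K$ by shape-regularity.

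The delicate point is the trace estimate for $\nabla v - \bm q$ when $s$ is small: the usual trace theorem $\|g\|_{0,F}\lesssim \|g\|_{1/2+\epsilon,K}$ needs more than $L^2$-regularity, so one cannot directly take $g=\nabla v-\bm q\in H^{s}(K)$ with $s\le 1/2$. The resolution, and the reason the hypothesis $\Delta v\in L^2(K)$ is imposed, is to use a trace inequality valid for the space $H(\mathrm{div};K)\cap H^s$ or, more precisely, to integrate by parts: writing $w_h$ as the trace of a suitable $H^1(K)$ extension $\tilde w$ (with $\|\tilde w\|_{1,K}\le C h_F^{-1/2}\|w_h\|_{0,F}$ after scaling, possible since $w_h$ is polynomial), Green's formula gives
\begin{equation*}
\int_F \big((\nabla v-\bm q)\cdot\bn_F\big)\,\tilde w\,ds
\le \int_{\partial K}\big((\nabla v-\bm q)\cdot\bn\big)\,\tilde w\,ds
= \int_K (\nabla v-\bm q)\cdot\nabla\tilde w\,dx + \int_K (\Delta v)\,\tilde w\,dx,
\end{equation*}
which is controlled by $\big(\|\nabla v-\bm q\|_{0,K}+\|\Delta v\|_{0,K}\,h_K\cdot h_K^{-1}\big)\|\tilde w\|_{1,K}$ after using a Poincar\'e/scaling argument on $\tilde w$ — and then the scaling of $\|\tilde w\|_{1,K}$ produces the claimed $h_F^{-1/2}$ factor. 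One must be slightly careful that the boundary integral over $\partial K\setminus F$ does not contaminate the estimate; this is arranged by choosing $\tilde w$ to vanish on $\partial K\setminus F$, which is again possible for a polynomial $w_h$ on $F$ via a standard bubble-type construction on the reference element.

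I expect the main obstacle to be exactly this low-regularity trace step: making rigorous that $\int_F(\nabla v\cdot\bn_F)w_h\,ds$ is well-defined and bounded when $\nabla v$ only lies in $H^{s}(K)$ for small $s$, which forces the integration-by-parts route above rather than a naive Sobolev trace embedding, and forces the extension $\tilde w$ of $w_h$ to be constructed carefully (supported near $F$, with the correct $h_F^{-1/2}$ scaling of its $H^1$ norm). Everything else — the affine scaling to the reference element, the discrete inverse trace inequality, stability of the $L^2$-projection, and $h_F\simeq h_K$ — is routine under the regularity of $\cT$.
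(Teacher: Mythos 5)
The paper you are working from does not actually prove this lemma: it is quoted verbatim from \cite{CaZh:15apriori} (``for the convenience of readers, we cite it here''), so there is no in-paper proof to compare yours against, and your proposal has to stand on its own. It does not: there is an essential gap at precisely the step you flag as delicate. The polynomial splitting and the bound for the $q$-part are fine (discrete trace inequality plus $L^2$-projection stability), but the whole weight of the argument rests on the remainder $\nabla v-q$, and there the construction fails. You need $\tilde w\in H^1(K)$ with $\tilde w|_F=w_h$ and $\tilde w|_{\partial K\setminus F}=0$. No such function exists unless $w_h$ vanishes on $\partial F$: its boundary trace would be the extension of $w_h|_F$ by zero to $\partial K$, which is not in $H^{1/2}(\partial K)$ (equivalently, $w_h\notin H^{1/2}_{00}(F)$ when $w_h|_{\partial F}\neq 0$, since $\int_F |w_h|^2/\mathrm{dist}(\cdot,\partial F)\,ds$ diverges). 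A bubble-type construction produces a lifting whose trace on $F$ is $w_h b_F$, not $w_h$, so Green's formula then bounds $\int_F(\nabla v\cdot\bn_F)\,w_h b_F\,ds$, a different quantity whose discrepancy with the target concentrates near $\partial F$; keeping a lifting that does not vanish on the other faces instead reintroduces boundary integrals of exactly the type you are trying to bound, making the argument circular.

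The gap is structural, not cosmetic: your argument never uses the hypothesis $v\in H^{1+s}(K)$ with $s>0$ and would therefore deliver a constant independent of $s$, which is impossible. Take $K$ a triangle with a vertex of $F$ at the origin and interior angle $\omega$ there, $w_h\equiv 1$, and $v=r^{\alpha}\sin(\alpha\theta)$, which is harmonic and lies in $H^{1+s}(K)$ for every $s<\alpha$. Then $\nabla v\cdot\bn_F=-\alpha r^{\alpha-1}$ on $F$, so $\bigl|\int_F(\nabla v\cdot\bn_F)\,w_h\,ds\bigr|=1$, while $\|\nabla v\|_{0,K}\le(\omega\alpha/2)^{1/2}(\mathrm{diam}\,K)^{\alpha}\to 0$ and $\Delta v=0$ as $\alpha\to 0^{+}$. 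Hence any correct proof must use the $H^{1+s}$ regularity quantitatively --- it is what makes the left-hand side well defined in the first place, via the $H^{s-1/2}(\partial K)$ versus $H^{1/2-s}(\partial K)$ duality, in which the zero-extension of $w_h|_F$ does belong to $H^{1/2-s}(\partial K)$ precisely because $1/2-s<1/2$ --- and the resulting constant necessarily degenerates as $s\to 0$. A proof that bypasses $s$ entirely, as yours does, cannot be repaired by a more careful choice of lifting.
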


\section{Residual-Based A Posteriori Error Estimators}
\setcounter{equation}{0}

This section describes local indicators and global estimators for nonconforming and discontinuous Galerkin finite element approximations.
The estimators for the nonconforming 
elements introduced in \cite{CaHeZh:14} and  for the discontinuous elements in this paper are more accurate than the existing estimators
(see, e.g., \cite{Ain:05, CaYeZh:09})
and differ in replacing the face tangential
derivative jumps by the face solution jumps.

 \subsection{CR Nonconforming Elements}
 
For each $F\in\cE$, 
let $\bt_F$ be the unit vector tangent to $F$ for $d=2$, and $\bn_F$ be
the unit vector normal to $F$. Denote the tangential component of
a vector field $\btau$ on $F$ by
 \[ 
 \gamma_F(\btau) := \left\{ \begin{array}{llll} \btau\cdot\bt_F, & d=2,\\[2mm]
 \btau\times \bn_F, & d=3.
 \end{array}
 \right.
 \]
Denote the element residuals and the corresponding indicators by  
 \[
  r^{cr}_K=f_0 
  \quad \mbox{and} \quad
  \eta_{r,K}^{cr} = \dfrac{h_\sK}{\sqrt{\a_\sK}}\|r_K^{cr}\|_{0,K}, \quad \forall \; K\in \cT,
  \]
 respectively. 
 Denote the respective face flux and tangential derivative jumps by
 \[
 j^{cr}_{n,F}=\jump{\a \nabla_h u^{cr}\cdot\bn}_F, \quad\forall \,\, F \in \cE_I
 \quad\mbox{and}\quad
  j^{cr}_{t,F}=\jump{\gamma_t(\nabla_h u^{cr})}_F, \quad\forall\,\, F\in\cE,
 \]
and the indicators corresponding to the face flux, tangential 
 derivative, and solution jumps by
  \[
   \eta_{j,n,F}^{cr}=\sqrt{\dfrac{h_F}{\aA}}\, \|j_{n,F}^{cr}\|_{0,F}, \quad
   \eta_{j,t,F}^{cr}=\sqrt{\aH h_F} \,\|j_{t,F}^{cr}\|_{0,F},
  \quad\mbox{and}\quad
  \eta_{j,u,F}^{cr} = \sqrt{ \dfrac{\aH}{h_F}}\, \| \jump{u^{cr}}\|_{0,F},
  \]
 respectively.
 Then the local indicator of the residual type for the nonconforming elements, introduced in 
 \cite{CaHeZh:14} and to be studied in this paper, is given by
\[
	\eta_K^{cr} =
	\left(
	 	 \left({\eta^{cr}_{r,K}} \right)^2
	 	 + \sum_{F\in \cE_K \cap \cE_I }      
		 	                \dfrac{1}{2} \left( \eta_{j,n,F}^{cr} \right)^2   
			                 + \sum_{F\in  \cE_K \cap \cE_I}
		 	\dfrac{1}{2} \left( {\eta^{cr}_{j,u, F}}\right)^2 
			  +\sum_{F\in  \cE_K \cap \cE_D}
		 	 \left( {\eta^{cr}_{j,u, F}}\right)^2	
	 \right)^{1/2}.
\]
Now the global estimator for the nonconforming elements is given by
 \[
 \eta_{cr}= 
 \left( \sum_{K \in \cT}
  \left(\eta_K^{cr} \right)^2 \right)^{1/2}
 =  \left(  \sum_{K\in\cT} \left( \eta_{r,K}^{cr} \right)^2 
  + \sum_{F\in\cE_I}   \left( \eta^{cr}_{j, n,F} \right)^2 
  + \sum_{F\in\cE}  \left(\eta^{cr}_{j,u, F}\right)^2
  \right)^{1/2}.
 \]
 
\begin{lem}\label{lem:svt}
Let $F\in\cE_I$.
For any $v^{cr} \in V^{cr}$, we have 
\beq \label{crjump}
 \|\jump{v^{cr}}\|_{0,F} \,\,
  \left\{\begin{array}{ll}
  =\dfrac{1}{\sqrt{12}}\,h_F\, \|\jump{\nabla v^{cr} \cdot \bt_F}\|_{0,F}, & \mbox{if  } d=2,\\[6mm]
  \leq C\, h_F\, \|\jump{\nabla v^{cr}\times \bn_F}\|_{0,F}, & \mbox{if  } d=3,
  \end{array}\right.
\eeq
where $C$ is a positive constant independent of $h_K$.
\end{lem}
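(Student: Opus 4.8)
The plan is to exploit the defining property of the Crouzeix--Raviart space, namely that $\int_F \jump{v^{cr}}\,ds = 0$ for every face $F\in\cE$, together with the fact that $\jump{v^{cr}}$ restricted to $F$ is an affine function (a polynomial in $P_1$) in the two-dimensional case and a component-wise affine function in the three-dimensional case. Since $v^{cr}|_K\in P_1(K)$ on each of the two elements $K^\pm$ sharing $F$, the jump $\jump{v^{cr}}$ is the restriction to $F$ of a difference of two affine functions, hence an affine function on the $(d-1)$-dimensional simplex $F$; its gradient along $F$ is a constant vector, and that constant is exactly $\jump{\nabla_h v^{cr}}$ projected onto the tangent space of $F$, i.e.\ $\jump{\gamma_F(\nabla_h v^{cr})}$.

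For $d=2$: parametrize $F$ by arclength $t\in[0,h_F]$ and write $\varphi(t):=\jump{v^{cr}}(t)$, an affine function. The condition $\int_0^{h_F}\varphi\,dt=0$ forces $\varphi$ to vanish at the midpoint $t=h_F/2$, so $\varphi(t)=\varphi'(t-h_F/2)$ with $\varphi'=\jump{\nabla_h v^{cr}\cdot\bt_F}$ constant. A direct computation then gives
\begin{equation*}
\|\varphi\|_{0,F}^2 = |\varphi'|^2\int_0^{h_F}\Bigl(t-\tfrac{h_F}{2}\Bigr)^2 dt = |\varphi'|^2\,\frac{h_F^3}{12},
\end{equation*}
which is precisely the stated equality $\|\jump{v^{cr}}\|_{0,F}=\frac{1}{\sqrt{12}}\,h_F\,\|\jump{\nabla_h v^{cr}\cdot\bt_F}\|_{0,F}$.

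For $d=3$: now $F$ is a triangle and $\jump{v^{cr}}$ is an affine function on $F$ whose integral over $F$ vanishes, hence it vanishes at the centroid of $F$. Its (constant) surface gradient $\grad_F\jump{v^{cr}}$ is the tangential part of $\jump{\nabla_h v^{cr}}$, which is equivalent (up to a fixed rotation of the tangent plane, hence with equal Euclidean norm) to $\jump{\nabla_h v^{cr}\times\bn_F}$. Writing $\jump{v^{cr}}(x) = \grad_F\jump{v^{cr}}\cdot(x-\bar x_F)$ with $\bar x_F$ the centroid, we get
\begin{equation*}
\|\jump{v^{cr}}\|_{0,F}^2 = \bigl|\grad_F\jump{v^{cr}}\bigr|^2\!\int_F |x-\bar x_F|_{\text{proj}}^2\,ds \le C\,h_F^2\,|F|\,\bigl|\jump{\nabla_h v^{cr}\times\bn_F}\bigr|^2 = C\,h_F^2\,\|\jump{\nabla_h v^{cr}\times\bn_F}\|_{0,F}^2,
\end{equation*}
using that the second-moment integral of a point on a triangle about its centroid is bounded by $C\,h_F^2\,|F|$ with $C$ depending only on shape regularity; the constant enters because, unlike the interval, an affine function on a triangle with zero mean is not uniquely determined by a single directional derivative, so one passes through the full surface gradient and estimates crudely. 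Taking square roots yields the claimed inequality. The only mild subtlety — and the place where a constant (rather than an equality) is unavoidable in 3D — is identifying $\grad_F\jump{v^{cr}}$ with $\jump{\nabla_h v^{cr}\times\bn_F}$ and controlling the centroidal second moment by $h_F$; both are routine via shape regularity.
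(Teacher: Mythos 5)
Your proof is correct, and for the three-dimensional case it takes a genuinely more explicit route than the paper. The paper disposes of $d=2$ by citing the direct calculation in \cite{CaHeZh:14} (which is exactly the midpoint computation you carried out), and for $d=3$ it argues abstractly: $\jump{v^{cr}}_F$ is affine with zero mean, the vanishing of $\jump{\nabla v^{cr}\times\bn_F}$ forces $\jump{v^{cr}}_F\equiv 0$, and the inequality then follows from equivalence of norms on a finite-dimensional space plus a scaling argument --- yielding an unspecified constant. You instead write the zero-mean affine jump explicitly as $\jump{v^{cr}}(x)=\tau\cdot(x-\bar x_F)$ with $\tau$ the tangential projection of $\jump{\nabla_h v^{cr}}$ (vanishing at the centroid because an affine function's mean over a simplex is its centroid value), identify $|\tau|=|\jump{\nabla_h v^{cr}\times\bn_F}|$ since $|w\times\bn_F|$ is exactly the length of the tangential part of $w$, and bound the centroidal second moment. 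This buys an explicit constant: since $x-\bar x_F$ lies in the plane of $F$ and $|x-\bar x_F|\le h_F$, one gets $\int_F|x-\bar x_F|^2\,ds\le h_F^2|F|$, so in fact $C=1$ works and no appeal to shape regularity is needed there (your closing remark slightly overstates what is required). Both arguments rest on the same two facts --- affinity of the jump and the zero-mean constraint from the definition of $V^{cr}$ --- so the difference is one of mechanism rather than substance, but your version is self-contained and quantitative where the paper's is a citation plus a compactness-style argument.
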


Here and thereafter, we use $C$ with or without subscripts in this
paper to denote a generic positive constant, possibly different at
different occurrences, that is independent of the mesh parameter
$h_K$ and the ratio $a_{\max}/a_{\min}$ but may depend on the domain
$\O$.

\begin{proof}
The equality in (\ref{crjump}) is proved in \cite{CaHeZh:14} by a direct calculation.
To show the validity of the inequality in (\ref{crjump}), for any $v^{cr} \in V^{cr}$, note first that 
its jump $\jump{v^{cr}}_F$ over $F\in\cE_I$ is a linear function and that $\int_F \jump{v^{cr}}\,ds =0$.  
If $\nabla \jump{v^{cr}}_F \times \bn_F$ vanishes on $F$, then the jump $\jump{v^{cr}}_F$ on $F$ is constant and, hence, zero. 
Now, the inequality in (\ref{crjump}) follows from the norms equivalence in a finite dimensional space
and the standard scaling argument. 
\end{proof}

\begin{rem}
 Instead of the face solution jumps, 
 existing residual based error estimators for the nonconforming elements 
 use the face tangential derivative jumps. {\em Lemma~\ref{lem:svt}} indicates that 
 our estimator $\eta_{cr}$ is less than the existing estimator and, hence, it is more accurate
 {\em (}see {\em Figure 6} in {\em \cite{CaHeZh:14})}.
 \end{rem}

\subsection{Discontinuous Elements}

Denote the element residuals and the corresponding indicators by  
 \[
  r^{dg}_K=f_{k-1}+\gradt (\a\nabla u^{dg}_k)  
  \quad \mbox{and} \quad
  \eta_{r,K}^{dg} = \dfrac{h_\sK}{\sqrt{\a_\sK}}\|r_K^{dg}\|_{0,K},
  \quad \forall \, K \in \cT.
  \]
 respectively.
 Denote the respective face flux and solution jumps by
 \[
   j^{dg}_{n,F}=\jump{\a \nabla_h u^{dg}_k\cdot\bn}_F, \,\;\forall \,\, F \in \cE_I
   \quad\mbox{and}\quad
 j^{dg}_{u,F}=\jump{u_k^{dg}}, \,\, \forall \,\, F\in \cE,
 \]
and the indicators corresponding to the face flux and solution jumps by
  \[
   \eta_{j,n,F}^{dg}=\sqrt{\dfrac{h_F}{\aA}}\, \|j_{n,F}^{dg}\|_{0,F}  
   \quad\mbox{and}\quad
  \eta_{j,u,F}^{dg} = \sqrt{ \dfrac{\aH}{h_F}}\, \| \jump{u^{dg}}\|_{0,F},
  \]
 respectively.
 Then the local indicator of the residual type for the discontinuous elements is given 
\cite{CaYeZh:09} by
\[
	\eta_K^{dg} =
	\left(
	 	 \left({\eta^{dg}_{r,K}} \right)^2
	 	 + \sum_{F\in \cE_K \cap \cE_I }      
		 	                 \dfrac{1}{2}\left( \eta_{j,n,F}^{dg} \right)^2   
			                 + \sum_{F\in  \cE_K\cap \cE_I} \dfrac{1}{2}
		 	 \left( {\eta^{dg}_{j,u, F}}\right)^2 	
			 +\sum_{F\in  \cE_K\cap \cE_D}
		 	 \left( {\eta^{dg}_{j,u, F}}\right)^2 
	 \right)^{1/2}.
\]
Now the global estimator for the nonconforming elements is given by
 \[
 \eta_{dg}= \left( \sum_{K \in \cT} \left(\eta_K^{dg} \right)^2 \right)^{1/2}
 = \left(  \sum_{K \in \cT}(\eta_{r,K}^{dg})^2 
  + \sum_{F\in\cE_I}  (\eta^{dg}_{j, n,F})^2 + \sum_{F\in\cE}  (\eta^{dg}_{j,u, F})^2
  \right)^{1/2}.
 \]

\section{Efficiency Bounds}
\setcounter{equation}{0}

Let $f_{k}$ be the $L^2$ projection of $f$ onto $D_k$ for $k\geq 1$. 
Denote local and global weighted oscillations by 
$$
{\mbox{osc}}_\a(f,K) = \dfrac{h_K}{\sqrt{\a_K}}  \|f-f_{k-1}\|_{0,K} \quad\mbox{and}\quad
{\mbox{osc}}_\a(f,\cT) = \left( \sum_{K\in\cT} {\mbox{osc}}_\a(f,K)^2 \right)^{1/2},
$$ 
respectively.
The local efficiency bounds of all indicators described in the previous section have been established
without the QMA. For example, for the discontinuous Galerkin finite element approximation, it is proved in \cite{CaYeZh:09} 
that for any $K\in \cT$, there exists a positive constant $C$ independent of $\a$ and $h_K$ such that
 \begin{equation} \label{local efficiency}
 \eta_K^{dg} \le C\, \left(\tri u-u_{dg}^k \tri_{U\triangle_K} + \osc_\a(f, \triangle_K)\right),
 \end{equation}
where $\triangle_K$ is a local neighborhood of $K$. 

The key idea of the proof is to use either element or edge bubble functions
in order to localize the error as well as to simplify the boundary conditions. 
The proof of local efficiency bound similar to (\ref{local efficiency}) can be found in \cite{CaZh:10a, CaHeZh:14}
for the CR nonconforming element.

\section{Reliability Bounds for CR Nonconforming and Discontinuous Elements}
\setcounter{equation}{0}

In this section, we establish robust reliability bounds for the CR nonconforming and
discontinuous Galerkin approximations in both the two- and three-dimension.

\subsection{CR Nonconforming Elements}

Without the QMA, the robust reliability bound for the nonconforming elements was first established 
in \cite{CaHeZh:14} for a slightly modification of the estimator $\eta_{cr}$ in the two-dimension. 
The modification is due to the failure of bounding the solution jump term. 
This difficulty may be overcome by using the trace
inequality introduced in \cite{CaZh:15apriori}.

Instead of using the modified Cl\'{e}ment-type interpolation \cite{CaHeZh:14} which may be extended 
to the three-dimension in a straightforward manner, we use the standard nonconforming interpolation that can also be naturally extended to the three-dimension. Moreover, it is local
and, hence, has the approximation and stability properties needed for obtaining robust reliability bound
both without the QMA.
 To this end, let
 \begin{eqnarray*}
  W^{1,1}(\cT) &=&\{v\in L^2(\O)\, : \, v|_K\in W^{1,1}(K)\,\,\,\forall\, K\in \cT\}\\[2mm]
 \mbox{and}\quad 
 W (\cT) &=& \{v\in W^{1,1}(\cT)\, : \, \int_{F} \jump{v} ds =0\,\,\, \forall\, F\in \cE \}.
 \end{eqnarray*}
Denote by $\theta_F (\bx)$ the nodal basis function of $V^{cr}_\cT$ associated with the face
$F\in\cE$, i.e., 
 \[
 \dfrac{1}{|F^\prime|} \int_{F^\prime} \theta_F (\bx)\, ds
 =\delta_{FF^\prime}
 \,\,\,\forall\, F^\prime\in\cE,
 \]
where $\delta_{FF^\prime}$ is the Kronecker delta. The local and global Crouzeix-Raviart interpolations are defined respectively by
 \[
  I^{cr}_K v = \sum_{F\in \cE_K} \left(\dfrac{1}{|F|} \int_{F} v ds \right) \theta_F(\bx)
 \quad\mbox{and}\quad
  I^{cr} v = \sum_{F\in \cE } \left(\dfrac{1}{|F|} \int_{F} v ds \right) \theta_F(\bx)
 \]
for the respective $v\in W^{1,1}(K)$ and $v\in W (\cT)$.
It was shown (see, e.g., Theorem 1.103 and Example 1.106 (ii) of \cite{ErGu:04}) that
for $v\in H^{1}(K)$ 
\beq \label{localcr}
\|v-I^{cr} v\|_{0,K} \leq C\, h_{K}\, \|\nabla v\|_{0,K} \quad\mbox{and}\quad
\| \grad(v-I^{cr} v)\|_{0,K} \leq C\, \|\nabla v\|_{0,K}.
\eeq

 \begin{thm}\label{th:relcr}
Let $u$ and $u^{cr}$ be the solutions of {\em (\ref{galerkin})} and {\em (\ref{problem_nc})}, respectively. 
Without the QMA in both the two- and three-dimension, the estimator $\eta_{cr}$ for the nonconforming elements satisfies 
the following robust reliability bound:
 \beq \label{relcr}
 \|\a^{1/2}\nabla_h (u-u^{cr})\|_0
  \leq C\,\left(\eta_{cr} + {\mbox{osc}}_\a (f)\right),
 \eeq
 where $C$ is a positive constant independent of the $\a$.
\end{thm}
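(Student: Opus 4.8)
The plan is to estimate the energy error $\|\a^{1/2}\nabla_h(u-u^{cr})\|_0$ by testing against $e := u - u^{cr}$ split suitably, and exploiting that the Crouzeix--Raviart interpolation is element-local (so no quasi-monotonicity assumption enters). First I would write $\nabla_h e = \nabla_h(u - u^{cr})$ and, since $u^{cr}$ is not conforming, work on the broken space: for the conforming part, pick $v = I^{cr} u^{cr} - u^{cr}$ or rather decompose $e$ via the Crouzeix--Raviart interpolation of $u$. The cleaner route is an $L^2$-representation of the error in the energy norm, as in \cite{CaHeZh:14}: writing $\|\a^{1/2}\nabla_h e\|_0^2 = (\a\nabla_h e,\nabla_h e)$ and integrating by parts element by element, one obtains a sum of an element-residual term $\sum_K(r_K^{cr}, \cdot)_K$, a face-flux-jump term $\sum_{F\in\cE_I}(j_{n,F}^{cr},\cdot)_F$, and boundary/consistency terms coming from the nonconformity of $u^{cr}$, which are governed by the face solution jumps $\jump{u^{cr}}$. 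The nonconformity terms are where $u^{cr}\notin H^1_0(\O)$ shows up: $\int_F \jump{\a\nabla u\cdot\bn_F}$-type terms against $\jump{u^{cr}}$ and, dually, terms of the form $\int_F(\nabla u\cdot\bn_F)\jump{u^{cr}}$.

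The key technical steps, in order: (1) choose the test function to be $w = e - I^{cr} e \in W(\cT)$ where needed and use the local approximation/stability estimates (\ref{localcr}), $\|v - I^{cr}v\|_{0,K}\le Ch_K\|\nabla v\|_{0,K}$ and $\|\nabla(v-I^{cr}v)\|_{0,K}\le C\|\nabla v\|_{0,K}$, together with the standard scaled trace inequality on each $K$, to bound the element-residual contribution by $\big(\sum_K(\eta_{r,K}^{cr})^2\big)^{1/2}\|\a^{1/2}\nabla_h e\|_0$ and the flux-jump contribution by $\big(\sum_{F\in\cE_I}(\eta_{j,n,F}^{cr})^2\big)^{1/2}\|\a^{1/2}\nabla_h e\|_0$ — here the weights $h_F/\aA$ on the flux jump and the bound $w_F^\pm\a_F^\pm\le\sqrt{\a^\pm\aH}$ from (\ref{weight1}) are exactly what makes these $\a$-robust. (2) For the consistency terms involving $\int_F(\nabla u\cdot\bn_F)\jump{u^{cr}}$ (with $u\in V^{1+s}(\cT)$ by elliptic regularity), apply the trace-type inequality (\ref{tracecombined}) of Lemma~2.1 with $w_h = \jump{u^{cr}}$ restricted to each side: this yields $\int_F(\nabla u\cdot\bn_F)\jump{u^{cr}} \le C h_F^{-1/2}\|\jump{u^{cr}}\|_{0,F}(\|\nabla u\|_{0,K}+h_K\|\Delta u\|_{0,K})$ on each element $K$ adjacent to $F$; then absorb $\|\a^{1/2}\nabla u\|$ and (after using $-\nabla\cdot(\a\nabla u)=f$, so $\a_K\Delta u = -f$ on $K$ up to lower-order curvature since $\a$ is piecewise constant) the residual $\|f\|$ into the right-hand side, producing $\sum_F \sqrt{\aH/h_F}\|\jump{u^{cr}}\|_{0,F}\times(\text{energy error} + \text{osc})$, i.e. $\eta_{j,u,F}^{cr}$ with the correct $\aH/h_F$ weight. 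Crucially, multiplying and dividing by $\a$ correctly here (using the harmonic average on $F$) is what lets the solution-jump term be estimated without modification, which is the improvement over \cite{CaHeZh:14}.

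The main obstacle I expect is step (2): controlling the solution-jump contribution so that the $\a$-weight comes out as $\aH/h_F$ rather than something involving a ratio of coefficients. The naive estimate of $\int_F(\nabla u\cdot\bn_F)\jump{u^{cr}}$ produces $\sqrt{\a_K}$ on the element side and one needs to pair it against $\sqrt{\aH/h_F}$ on the face; since $\aH\le\min(\a_F^-,\a_F^+)\le\a_K$, this direction is favorable, but one must be careful on Dirichlet faces (where $\jump{u^{cr}}=u^{cr}$ and $\int_F u^{cr}\,ds=0$ by the CR constraint, so a Poincar\'e/Friedrichs-type argument on the face is available) and one must check that no term forces a bound in terms of $\a_{\max}/\a_{\min}$. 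I would also need to verify, via a global argument (e.g. a broken Poincar\'e--Friedrichs inequality, or the identity $\|\a^{1/2}\nabla_h e\|_0^2 = (\a\nabla_h e, \nabla_h(e - I^{cr}e)) + (\a\nabla_h e,\nabla_h I^{cr}e)$ with the second term handled by the discrete equation (\ref{problem_nc}) and consistency), that the decomposition closes with all constants $\a$-independent. Finally, combining the pieces and dividing by $\|\a^{1/2}\nabla_h e\|_0$ gives (\ref{relcr}); the oscillation term ${\mbox{osc}}_\a(f)$ appears precisely from replacing $f$ by $f_0$ in the CR element residual $r_K^{cr}=f_0$.
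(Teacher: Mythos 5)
Your proposal follows essentially the same route as the paper's proof: the $L^2$ error representation from \cite{CaHeZh:14} splitting the error into element-residual, flux-jump, and solution-jump contributions, the element-local Crouzeix--Raviart interpolant with (\ref{localcr}) and the weight bounds (\ref{weight1}) for the first two terms, and the trace inequality (\ref{tracecombined}) with $w_h=\jump{u^{cr}}$ (plus $\a_K\Delta u=-f$ to generate the residual/oscillation term) for the solution jump. The extra global Poincar\'e--Friedrichs step you worry about at the end is unnecessary, since the representation already has $\|\a^{1/2}\nabla_h e\|_0^2$ on the left and one simply divides through.
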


\begin{proof}
Let  
 \[
 e^{cr}=u-u^{cr}
 \quad\mbox{and}\quad
 e_I^{cr} = I^{cr}u - u^{cr} = I^{cr} e^{cr}.
 \]
Then we have the following $L^2$ representation of the true error in the (broken) energy norm 
(see Lemma 2.1 of {\cite{CaHeZh:14}}):
 \[
  \|\a^{1/2}\nabla_h e^{cr}\|_0^2
   = \sum_{K \in \cT} (f, e^{cr}-	
	e_I^{cr})_K -\sum_{F\in \cE_I} \int_F j_{n,F}^{cr} \,\{e^{cr}-e_I^{cr}\}^w\,ds
	 -\sum_{F \in \cE} \int_F \{ \alpha \nabla e^{cr} \cdot \bn \}_w \, \jump{u^{cr}} \,ds.
	 \]
The first two terms of the above equality 
may be bounded in a similar fashion as that in \cite{CaHeZh:14}. 
That is, it follows from the Cauchy-Schwarz and triangle or trace inequalities, (\ref{localcr}), 
and (\ref{weight1}) that
 \begin{equation}\label{CR-reliability 1}
  (f, \, e^{cr}-e_I^{cr})_K
	 \le  C\, \left( \eta^{cr}_{r,K} +\osc_\a(f,K)\right)\|\alpha^{1/2} \nabla_h e^{cr}\|_{0,K}, 
	\quad \forall \,\, K \in \cT, 
\end{equation}
and
\begin{eqnarray}
	\int_F j_{n,F}^{cr} \{e- e_I^{cr} \}^w \,ds 
	&\le& 
		C\| j_{n,F}^{cr}\|_{0,F}  
		\{\omega^+\|(e^{cr}-e_I^{cr})|_{K_F^-}\|_{0,F}
		+\omega^-\|(e^{cr}-e_I^{cr})_{K_F^+}\|_{0,F} \} \nonumber
	\\[2mm] \label{CR-reliability 2}
	&\le& C \sqrt{\frac{h_F}{\aA}} \| j_{n,F}^{cr}\|_{0,F} \, \|\alpha^{1/2} \nabla e^{cr}\|
	_{K_F^+ \cup K_F^-},
	\quad \forall F \,\, \in \cE_I.
\end{eqnarray}
To bound the third term on the solution jump, the key is the inequality in (\ref{tracecombined}), which
together with (\ref{weight1}) and the local efficiency bound of the element residual,
yields
\begin{eqnarray} \nonumber
		 && \int_F\{\alpha \nabla e^{cr} \cdot \bn_F\}_w\, \jump{u^{cr}} \,ds \\[2mm] \nonumber
		&=&\int_F \jump{u^{cr}} \,\left(\o^+( \alpha^+ \nabla e^{cr}\cdot \bn)|_{K^+} 
			+\o^- (\alpha^- \nabla e^{cr}\cdot \bn)|_{K^-} \right)\,ds\
		\nonumber \\[2mm]
		&\le &C\, \sqrt{\frac{ \alpha_{F,H}}{h_F}} \,\, \| \jump{u^{cr}}\|_{0,F} \, 
			\sum_{K \in \cT_F} \left(\|\alpha^{1/2} \nabla e^{cr}\|_{0,K}+
				h_K \alpha_K^{-1/2}\,\|f+\nabla \cdot (\a \nabla u^{cr})\|_{0,K}\right)
		\nonumber \\[2mm]  \label{CR-reliability 3}
		&\leq & C\, \eta^{cr}_{j,u,F} \,\sum_{K \in \cT_F}\left(\|\alpha^{1/2} \nabla_h e^{cr}\|_{0,K}		
		 +{\mbox{osc}}_{\alpha}(f,K) \right), \quad \forall \,\,F \in \cE.
\end{eqnarray}
Summing (\ref{CR-reliability 1}) over $K\in \cT$,  (\ref{CR-reliability 2}) over $F\in \cE_I$,  and 
(\ref{CR-reliability 3}) over $F\in \cE$ implies the validity of (\ref{relcr}). This completes the proof of
the theorem.
\end{proof}

\subsection{Discontinuous Elements}

Without the QMA, the robust reliability bound for the discontinuous elements may be obtained in a similar
fashion as that for the nonconforming elements. Again, the key steps are the $L^2$ representation of
the true error and the inequality (\ref{tracecombined}) to bound the solution jump. Moreover, we simply use
the local constant average of the error instead of the modified Cl\'{e}ment interpolation due to the complete
local feature of the discontinuous elements.

Let $u$ and $u_k^{dg}$ be the solutions of (\ref{galerkin}) and (\ref{problem_dg}), respectively. Denote 
the true error by
 \[
  e^{dg} = u-u^{dg}_k. 
   \]
Let $\bar{e}^{dg}$ be piecewise constants on $\cT$ with $\bar{e}^{dg}|_K$ being 
the average of $e^{dg}$ on $K\in\cT$. It is well known that 
 \beq \label{localdg}
\|e^{dg} - \bar{e}^{dg}\|_{0,K} \leq C\, h_{K}\, \|\nabla\, e^{dg}\|_{0,K}, \quad \forall \,K\in \cT, 
\eeq
where $C$ only depends on the regularity of $\cT$.

\begin{lem}\label{5.6}
The true error of the discontinuous finite element approximation in the broken energy norm has the 
following error representation:
\begin{eqnarray}  \nonumber
 \|\a^{1/2} \nabla_h \, e^{dg}\|_0^2
 &=& \sum_{K\in\cT}  \left(f+\nabla \cdot (\a \nabla u_k^{dg}), \, e^{dg}-\bar{e}^{dg} \right)_K 
  - \sum_{F\in\cE} \int_F \{\a\grad e^{dg}\cdot\bn \}_w\, \jump{u_k^{dg}}\, ds \\ [2mm] \label{dg:5}
 &-&  \sum_{F\in\cE_I} \int_F \jump{\a\grad u^{dg}\cdot\bn} \,\{ e^{dg} - \bar{e}^{dg}\}^w\, ds 
   -\sum_{F\in\cE} \int_F \gamma \dfrac{\aH}{h_F}\, \jump{u_k^{dg}} \,\jump{\bar{e}^{dg}}\,ds. \,\,\,\,\,\,
\end{eqnarray}
\end{lem}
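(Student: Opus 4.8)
The plan is to derive the identity (\ref{dg:5}) directly from the error equation (\ref{erroreq_dg}) and the definition of $a_{dg}$, in the same spirit as Lemma~2.1 of \cite{CaHeZh:14} for the nonconforming case. First I would start from
\[
\|\a^{1/2}\nabla_h e^{dg}\|_0^2 = (\a\nabla_h e^{dg},\nabla_h e^{dg})
= a_{dg}(e^{dg},e^{dg}) - \sum_{F\in\cE}\int_F \gamma\dfrac{\aH}{h_F}\jump{u_k^{dg}}\jump{e^{dg}}\,ds
+ 2\sum_{F\in\cE}\int_F \{\a\nabla e^{dg}\cdot\bn_F\}_w^F \jump{u_k^{dg}}\,ds,
\]
where I have used $\jump{e^{dg}}_F = -\jump{u_k^{dg}}_F$ on every $F$ (since $u$ is continuous and vanishes on $\partial\O$), and $a_{dg}$ evaluated on the diagonal produces the element term, the penalty term, and two copies of the consistency term. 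Now I would invoke the error equation (\ref{erroreq_dg}) to replace $a_{dg}(e^{dg},e^{dg})$ by $a_{dg}(e^{dg},e^{dg}-v_h)$ for any $v_h\in D_k$; the natural choice is $v_h=\bar e^{dg}$, the piecewise-constant average of $e^{dg}$.

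The second step is to expand $a_{dg}(e^{dg}, e^{dg}-\bar e^{dg})$ term by term. In the volume term $(\a\nabla_h e^{dg},\nabla_h(e^{dg}-\bar e^{dg}))$ I would integrate by parts elementwise: since $\bar e^{dg}$ is constant on each $K$, $\nabla_h\bar e^{dg}=0$, so this is just $(\a\nabla_h e^{dg},\nabla_h e^{dg}) = \|\a^{1/2}\nabla_h e^{dg}\|_0^2$; integrating by parts against $e^{dg}-\bar e^{dg}$ is cleaner — writing $(\a\nabla_h e^{dg},\nabla_h(e^{dg}-\bar e^{dg}))_K = -(\nabla\cdot(\a\nabla e^{dg}), e^{dg}-\bar e^{dg})_K + \int_{\partial K}(\a\nabla e^{dg}\cdot\bn)(e^{dg}-\bar e^{dg})\,ds$, and since $-\nabla\cdot(\a\nabla u)=f$ while $\nabla\cdot(\a\nabla u_k^{dg})$ is the polynomial term in $r_K^{dg}$, the volume contribution produces the $\bigl(f+\nabla\cdot(\a\nabla u_k^{dg}),\,e^{dg}-\bar e^{dg}\bigr)_K$ summand. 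Collecting the boundary integrals over all faces and combining with the consistency terms $-\sum_F\int_F\{\a\nabla e^{dg}\cdot\bn_F\}_w^F\jump{e^{dg}-\bar e^{dg}}\,ds$ and $-\sum_F\int_F\{\a\nabla(e^{dg}-\bar e^{dg})\cdot\bn_F\}_w^F\jump{e^{dg}}\,ds$, together with the penalty term $-\sum_F\int_F\gamma\tfrac{\aH}{h_F}\jump{e^{dg}}\jump{e^{dg}-\bar e^{dg}}\,ds$, I would use the jump–average identity (\ref{jump-id}) to reorganize $\sum_{\partial K}$ into face integrals over $\cE$ involving $\jump{\a\nabla e^{dg}\cdot\bn}$ and $\{\a\nabla e^{dg}\cdot\bn\}_w$. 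The key algebraic point is that $\jump{e^{dg}} = -\jump{u_k^{dg}}$ and $\jump{e^{dg}-\bar e^{dg}} = -\jump{u_k^{dg}} - \jump{\bar e^{dg}}$, so the jump factors can all be rewritten in terms of $\jump{u_k^{dg}}$ and $\jump{\bar e^{dg}}$.

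The third step is bookkeeping: after substitution one should find that the $\|\a^{1/2}\nabla_h e^{dg}\|_0^2$ piece coming from the volume term cancels against the extra copy I introduced at the outset (or, more precisely, that the terms I added and subtracted using $\jump{e^{dg}}=-\jump{u_k^{dg}}$ reassemble exactly into the four terms on the right-hand side of (\ref{dg:5}): the element residual term, the consistency term $-\sum_F\int_F\{\a\nabla e^{dg}\cdot\bn\}_w\jump{u_k^{dg}}\,ds$, the flux-jump term $-\sum_{F\in\cE_I}\int_F\jump{\a\nabla u^{dg}\cdot\bn}\{e^{dg}-\bar e^{dg}\}^w\,ds$, and the residual penalty term $-\sum_F\int_F\gamma\tfrac{\aH}{h_F}\jump{u_k^{dg}}\jump{\bar e^{dg}}\,ds$). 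Here one uses that $\jump{\a\nabla u\cdot\bn}_F=0$ for the exact solution on interior faces, so $\jump{\a\nabla e^{dg}\cdot\bn}_F = -\jump{\a\nabla u_k^{dg}\cdot\bn}_F$, which turns the interior-face flux term into the stated form with $j_{n,F}^{dg}$.

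I expect the main obstacle to be purely organizational: correctly tracking signs and the weighted-average versus arithmetic-average conventions $\{\cdot\}_w^F$ vs.\ $\{\cdot\}^w_F$ when splitting each face integral $\int_{\partial K}$ into the two contributions from the two elements sharing a face, and making sure every $\jump{e^{dg}}$ is consistently replaced using the continuity of $u$. There is no analytic difficulty — no inequalities, no interpolation estimates enter Lemma~\ref{5.6} itself; (\ref{localdg}) and (\ref{tracecombined}) are only needed afterwards to turn this identity into the reliability bound, exactly as in the nonconforming case (\ref{CR-reliability 1})–(\ref{CR-reliability 3}). The one subtlety worth stating carefully is the treatment of boundary faces $F\in\cE_D$: there $\jump{u_k^{dg}}=u_k^{dg}$, $\jump{e^{dg}}=-u_k^{dg}$ (since $u=0$ on $\partial\O$), and the weighted averages degenerate to one-sided values, so the boundary faces contribute to the second and fourth sums over all of $\cE$ but not to the interior-face flux sum, which is consistent with the ranges of summation written in (\ref{dg:5}).
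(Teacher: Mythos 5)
Your plan is essentially the paper's own proof: the paper starts from $\|\a^{1/2}\nabla_h e^{dg}\|_0^2=(\a\nabla_h e^{dg},\nabla_h(e^{dg}-\bar e^{dg}))$ (using $\nabla_h\bar e^{dg}=0$), integrates by parts elementwise, applies the jump identity (\ref{jump-id}) together with the continuity of $u$ and of $\a\nabla u\cdot\bn$ across interior faces, and then adds the identity $a_{dg}(e^{dg},\bar e^{dg})=0$ coming from (\ref{erroreq_dg}) --- the same bookkeeping you describe, just entered from the other end. The argument is correct in substance; the only blemish is that both correction terms in your opening display carry the wrong sign (substituting $\jump{e^{dg}}=-\jump{u_k^{dg}}$ turns the penalty part of $a_{dg}(e^{dg},e^{dg})$ into $+\sum_{F}\int_F\gamma\frac{\aH}{h_F}\jump{u_k^{dg}}\jump{e^{dg}}\,ds$ and the consistency part into $-2\sum_{F}\int_F\{\a\nabla e^{dg}\cdot\bn_F\}_w^F\jump{u_k^{dg}}\,ds$), a slip of exactly the kind you yourself flag as the main hazard and one that disappears once the substitution is carried through consistently.
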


\begin{proof}
It follows from the error equation in (\ref{erroreq_dg}), integrations by parts, (\ref{jump-id}), 
the continuities of the solution $u$ and the 
normal component of the flux $-\a\nabla\,u$ across any face $F\in \cE_I$, and the homogeneous 
Dirichlet boundary condition that
\begin{eqnarray*}
  && \|\a^{1/2} \nabla_h \, e^{dg}\|^2_0
   = \left(\a \grad_h e^{dg}, \,\grad_h (e^{dg} -\bar{e}^{dg} )\right)  \\ [2mm]
 &=& \sum_{K\in\cT} \left(f+\gradt (\a\nabla u^{dg}_k), \, e^{dg}-\bar{e}^{dg} \right)_K 
   +\int_{\p K} (\a\grad e^{dg}\cdot\bn)(e^{dg} - \bar{e}^{dg}) \, ds \\ [2mm]
 &=&  \sum_{K\in\cT} \left(f+\gradt (\a\nabla u^{dg}_k), \, e^{dg}-\bar{e}^{dg} \right)_K 
   - \sum_{F\in\cE_I} \int_F \{\a\grad e^{dg}\cdot\bn \}_w\, \jump{u_k^{dg} + \bar{e}^{dg}} \,ds \\[2mm]
 &&-\sum_{F\in\cE_I} \int_F \jump{\a\grad u^{dg}\cdot\bn} \, \{ e^{dg} - \bar{e}^{dg}\}^w ds 
  - \sum_{F\in\cE_D} \int_F(\a\grad e^{dg}\cdot\bn)(u_k^{dg} + \bar{e}^{dg}) \,ds.
\end{eqnarray*}
On the other hand, the fact that $a_{dg}(e^{dg}_k,\, \bar{e}^{dg}) = 0$ implies
 \[ 
 \sum_{F\in\cE}\int_F\gamma  \dfrac{\aH}{h_F} \,\jump{e^{dg}} \,\jump{\bar{e}^{dg}}\,ds  
  -\sum_{F\in\cE_I}\int_F\{\a\nabla e^{dg}\cdot\bn\}_{w}\, \jump{\bar{e}^{dg}}\,ds   
   -\sum_{F\in\cE_D}\int_F(\a\nabla e^{dg}\cdot\bn ) \,\bar{e}^{dg}\,ds
   =0.
 \] 
Combining the above two equalities gives (\ref{dg:5}). This completes the proof of the lemma.
\end{proof}

\begin{thm} 
Let $u$ and $u_k^{dg}$ be the solution of (\ref{galerkin}) and (\ref{problem_dg}), respectively.
Without the QMA in both the two- and three-dimension, the estimator $\eta_{dg}$ for the discontinuous 
element approximation satisfies the following robust reliability bound:
\beq \label{reldg}
 \tri u-u^{dg}_k\tri_{dg} 
 \leq C\,\left(\eta_{dg} + {\mbox{osc}}_\a(f,\,\cT)\right),
\eeq
where $C$ is a positive constant independent of the $\a$.
\end{thm}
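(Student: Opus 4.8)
The plan is to mimic the structure of the proof of Theorem~\ref{th:relcr}, now starting from the error representation in Lemma~\ref{5.6} rather than the one for the nonconforming element. Observe that the DG norm squared splits as $\tri u-u^{dg}_k\tri_{dg}^2 = \|\a^{1/2}\nabla_h e^{dg}\|_0^2 + \sum_{F\in\cE}\|u_k^{dg}\|_{J,F}^2$, and the second sum is exactly $\sum_{F\in\cE}(\eta_{j,u,F}^{dg})^2$ since $\jump{u}_F=0$ on interior faces and $u=0$ on the boundary. So it suffices to bound the broken energy seminorm $\|\a^{1/2}\nabla_h e^{dg}\|_0$ by $C(\eta_{dg}+\mathrm{osc}_\a(f,\cT))$, and the four terms of \eqref{dg:5} are what must be controlled.

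First I would treat the element-residual term $\sum_K (f+\nabla\cdot(\a\nabla u_k^{dg}),\,e^{dg}-\bar e^{dg})_K$: inserting $f_{k-1}$, applying Cauchy--Schwarz, using \eqref{localdg} to get $\|e^{dg}-\bar e^{dg}\|_{0,K}\le C h_K\|\nabla e^{dg}\|_{0,K}$, and absorbing $h_K/\sqrt{\a_K}$ into the definitions yields a bound by $C\sum_K(\eta^{dg}_{r,K}+\mathrm{osc}_\a(f,K))\|\a^{1/2}\nabla_h e^{dg}\|_{0,K}$, in direct analogy with \eqref{CR-reliability 1}. Second, the flux-jump term $\sum_{F\in\cE_I}\int_F j^{dg}_{n,F}\{e^{dg}-\bar e^{dg}\}^w\,ds$ is handled exactly as \eqref{CR-reliability 2}: split the weighted average over the two elements sharing $F$, use \eqref{localdg} elementwise, the trace inequality, and the weight bounds \eqref{weight1} to get a bound by $C\,\eta^{dg}_{j,n,F}\,\|\a^{1/2}\nabla e^{dg}\|_{0,K_F^+\cup K_F^-}$. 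Third, the solution-jump term $\sum_{F\in\cE}\int_F\{\a\nabla e^{dg}\cdot\bn\}_w\,\jump{u_k^{dg}}\,ds$ is where the trace inequality \eqref{tracecombined} enters: writing the weighted average out and applying \eqref{tracecombined} to each of $(\alpha^\pm\nabla e^{dg}\cdot\bn)|_{K^\pm}$ with $w_h=\jump{u_k^{dg}}$, then combining with \eqref{weight1} and the local efficiency of the element residual $\|f+\nabla\cdot(\a\nabla u_k^{dg})\|_{0,K}$, produces a bound by $C\,\eta^{dg}_{j,u,F}\sum_{K\in\cT_F}(\|\a^{1/2}\nabla_h e^{dg}\|_{0,K}+\mathrm{osc}_\a(f,K))$, exactly as in \eqref{CR-reliability 3}.

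The genuinely new term is the fourth one, $\sum_{F\in\cE}\int_F\gamma\frac{\aH}{h_F}\jump{u_k^{dg}}\jump{\bar e^{dg}}\,ds$, which has no counterpart in the nonconforming proof. Here I would use Cauchy--Schwarz on each face to get $\gamma\,\|u_k^{dg}\|_{J,F}\,\|\bar e^{dg}\|_{J,F}$; the first factor is $\gamma\,\eta^{dg}_{j,u,F}$. For the second factor I need $\sum_F\|\bar e^{dg}\|_{J,F}^2=\sum_F\frac{\aH}{h_F}\|\jump{\bar e^{dg}}\|_{0,F}^2\le C\,\tri u-u^{dg}_k\tri_{dg}^2$, i.e.\ the piecewise-constant projection of $e^{dg}$ has DG jump seminorm controlled by the full DG norm of $e^{dg}$. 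This should follow by writing $\jump{\bar e^{dg}}=\jump{\bar e^{dg}-e^{dg}}+\jump{e^{dg}}=\jump{\bar e^{dg}-e^{dg}}+\jump{u_k^{dg}}$ (using $\jump{u}=0$), bounding $\frac{\aH}{h_F}\|\jump{\bar e^{dg}-e^{dg}}\|_{0,F}^2$ on each of the two adjacent elements via a scaled trace inequality $\|\bar e^{dg}-e^{dg}\|_{0,F}^2\le C(h_K^{-1}\|\bar e^{dg}-e^{dg}\|_{0,K}^2+h_K\|\nabla e^{dg}\|_{0,K}^2)$ together with \eqref{localdg}, and comparing $\aH$ to the adjacent $\a_K$; the $\jump{u_k^{dg}}$ contribution is again $\sum_F(\eta^{dg}_{j,u,F})^2$. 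I anticipate this estimate on $\|\bar e^{dg}\|_{J,F}$ to be the main technical obstacle, since it is the one step not reducible to the already-established ingredients and it is what lets the jump-stabilization term be absorbed.

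Finally, I would sum: \eqref{dg:5} gives $\|\a^{1/2}\nabla_h e^{dg}\|_0^2\le C(\eta_{dg}+\mathrm{osc}_\a(f,\cT))\big(\|\a^{1/2}\nabla_h e^{dg}\|_0 + \tri u-u^{dg}_k\tri_{dg}\big)$ after Cauchy--Schwarz over $K\in\cT$ and $F\in\cE$ and finite-overlap of the patches $\cT_F$ and $\triangle_K$. Adding the trivial identity $\sum_F\|u_k^{dg}\|_{J,F}^2=\sum_F(\eta^{dg}_{j,u,F})^2\le\eta_{dg}^2$ to the left side to complete the full DG norm, using $\tri u-u^{dg}_k\tri_{dg}^2\le C(\eta_{dg}+\mathrm{osc})\,\tri u-u^{dg}_k\tri_{dg}$, and dividing by $\tri u-u^{dg}_k\tri_{dg}$ yields \eqref{reldg}. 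If the stabilization term cannot be fully absorbed, a Young's-inequality step $ab\le\epsilon a^2+C_\epsilon b^2$ with small $\epsilon$ on the $\|\bar e^{dg}\|_{J,F}$-versus-$\tri e^{dg}\tri_{dg}$ coupling, moving the $\epsilon\,\tri e^{dg}\tri_{dg}^2$ to the left, closes the argument.
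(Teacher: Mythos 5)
Your proposal is correct and follows essentially the same route as the paper: the same reduction of the DG norm to the broken energy seminorm (since the jump part of the error equals the solution-jump estimator), the same four-term splitting from Lemma~\ref{5.6} with $I_1$, $I_2$, $I_3$ handled as in Theorem~\ref{th:relcr} via \eqref{localdg}, \eqref{weight1}, and \eqref{tracecombined}, and the same treatment of the stabilization term $I_4$ by splitting $\jump{\bar e^{dg}}$ into $\jump{e^{dg}}=-\jump{u_k^{dg}}$ plus $\jump{\bar e^{dg}-e^{dg}}$ and controlling the latter with a scaled trace inequality. The step you flag as the main obstacle is exactly the paper's argument and poses no difficulty.
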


\begin{proof}
By the definition of the DG norm $ \tri \cdot\tri_{dg} $, to prove the validity of (\ref{reldg}), it suffices to show that
 \beq \label{reldg1}
 \|\a^{1/2}\grad_h\, (u-u^{dg}_k)\|_0
 \leq C\,\left(\eta_{dg} + {\mbox{osc}}_\a(f,\,\cT)\right).
\eeq
To this end, denote the estimators corresponding to the element residual, the face flux jump, and the 
 solution jump by
  \[
   \eta^{dg}_r = \left(\sum_{K\in\cT} \left(\eta^{dg}_{r,K}\right)^2 \right)^{1/2},
   \quad 
   \eta^{dg}_{j,n} =\left(\sum_{F\in\cE_I} \left(\eta_{j,n,F}^{dg} \right)^2\right)^{1/2},
   \quad\mbox{and}\quad
   \eta^{dg}_{j,u} = \left( \sum_{F\in\cE} \left( \eta_{j,u,F}^{dg}\right)^2 \right)^{1/2}.
   \]
  respectively. Denote four terms in Lemma~\ref{5.6} by $I_1$, $I_2$, $I_3$, and $I_4$, respectively.
Hence,
 \[
  \|\a^{1/2} \nabla_h \, (u-u^{dg}_k)\|_0^2 = I_1+I_2+I_3+I_4.
  \]
 In a similar fashion as the proof of Theorem~\ref{th:relcr}, the $I_i$ for $i=1,\,2,\,3$
may be bounded as follows:
 \begin{eqnarray*} \label{dg1}
 I_1
  & \le & C\, \left( \eta^{dg}_r
   + \osc_{\a}(f,\cT) \right)  \|\a^{1/2}\grad_h \,e^{dg}\|_0, \quad 
   I_2  
	\leq  C\, \eta^{dg}_{j, u}
	\left( \|\a^{1/2} \grad_h\, e^{dg} \|_0 +  \osc_{\a}(f, \cT)\right),
		\\[2mm] \label{dg3}
  \mbox{and}\quad  I_3
   & \le&   C\,  \eta^{dg}_{j,n}\, 
    \|\a^{1/2}\grad_h\, e^{dg}\|_0.
	\end{eqnarray*}

To bound $I_4$, for all $F\in\cE$, it follows from the triangle and the Cauchy-Schwartz inequalities, 
the continuity of the solution $u$, the trace inequality, and (\ref{localdg}) that 
 \begin{eqnarray*}
   && \int_F \dfrac{\aH}{h_F} \, \jump{u^{dg}_k}\, \jump{\bar{e}^{dg}} \, ds 
   \leq \int_F \dfrac{\aH}{h_F} \, \jump{u^{dg}_k}\, 
      \left(\|\jump{e^{dg}}\|_{0,F} + \|\jump{e^{dg}-\bar{e}^{dg}}\|_{0,F}\right)\,ds
   \\ [2mm]
  &\leq  & 
  \left(\eta_{j,u,F}^{dg}\right)^2 
   + \eta_{j,u,F}^{dg}\,\sqrt{\dfrac{\aH}{h_F}}\, \|\jump{e^{dg}-\bar{e}^{dg}}\|_{0,F}
   \leq \left(\eta_{j,u,F}^{dg}\right)^2 
   + C\, \eta_{j,u,F}^{dg}\,  \sum_{K \in \cT_F} \|\a^{1/2}\grad_h e^{dg}\|_{0, K}.
\end{eqnarray*}
Summing over all faces $F \in \cE$ and using the Cauchy-Schwarz inequality give
 \[ 
  I_4= \gamma\,\sum_{F \in \cE} \int_F \dfrac{\aH}{h_F}\, \jump{u^{dg}_k}\, \jump{\bar{e}^{dg}}\, ds  
  \leq  C\, \left(  \left(\eta^{dg}_{j,u}\right)^2 +  \eta^{dg}_{j,u}\, \|\a^{1/2} \grad_h\, e^{dg} \|_0 \right).
\] 
Combining the bounds for all $I_i$ and using the Cauchy-Schwarz inequality, we have
 \[
 \|\a^{1/2} \nabla_h \, e^{dg}\|_0^2
 \leq C\, \left( \eta^{dg}_{r} + \eta_{j,n}^{dg}    +  \eta_{j,u}^{dg} 
  + \osc_{\a}(f, \cT)\right) \, \|\a^{1/2} \grad_h\, e^{dg} \|_0 
  + C \left( \left( \eta_{j,u}^{dg} \right)^2 + \osc_{\a}(f,\cT)^2 \right),
 \]
which, together with the Cauchy-Schwarz inequality, implies 
 \[
 \|\a^{1/2} \nabla_h \, e^{dg}\|_0 \leq C\,\left( \eta_{dg} + \osc_{\a}(f,\cT)\right).
 \]
This proves the validity of (\ref{reldg1}) and, hence, the theorem.
This completes the proof of the theorem.
\end{proof}

\end{document}